\newcommand\CA{{\mathscr A}} 
\newcommand\CB{{\mathscr B}}
\renewcommand\CD{{\mathscr D}}
\newcommand\CI{{\mathcal I}}
\newcommand\BBC{{\mathbb C}}
\newcommand\BBR{{\mathbb R}}
\newcommand\BBZ{{\mathbb Z}}
\newcommand\codim{\operatorname{codim}}
\numberwithin{equation}{section}
\theoremstyle{plain}
\newtheorem{lemma}[equation]{Lemma}
\newtheorem{theorem}[equation]{Theorem}
\theoremstyle{definition}
\newtheorem{remark}[equation]{Remark}
\newtheorem{example}[equation]{Example}
\subjclass[2010]{Primary 52B30, 55P20, 52C35}
\begin{document}

\title[Restrictions of aspherical arrangements]
{Restrictions of aspherical arrangements}

\author[N. Amend]{Nils Amend}
\address
{Institut f\"ur Algebra,~Zahlentheorie und Diskrete Mathematik,
Fakult\"at f\"ur Mathematik und Physik,
Gottfried Wilhelm Leibniz Universit\"at Hannover,
Welfengarten 1, D-30167 Hannover, Germany}
\email{amend@math.uni-hannover.de}

\author[T. M\"oller]{Tilman M\"oller}
\address
{Fakult\"at f\"ur Mathematik,
Ruhr-Universit\"at Bochum,
D-44780 Bochum, Germany}
\email{tilman.moeller@rub.de}

\author[G. R\"ohrle]{Gerhard R\"ohrle}
\address
{Fakult\"at f\"ur Mathematik,
Ruhr-Universit\"at Bochum,
D-44780 Bochum, Germany}
\email{gerhard.roehrle@rub.de}

\keywords{$K(\pi,1)$ arrangement,
restriction of an arrangement}

\allowdisplaybreaks

\begin{abstract}
In this note we present examples 
of $K(\pi,1)$-arrangements which 
admit a restriction which fails to be $K(\pi,1)$. 
This shows that asphericity is not hereditary among
hyperplane arrangements.
\end{abstract}

\maketitle


\section{Introduction}

We say that a complex $\ell$-arrangement $\CA$ is a 
\emph{$K(\pi,1)$-arrangement}, or that $\CA$ is $K(\pi,1)$ for short, provided 
the complement $M(\CA)$ of the union of the hyperplanes in 
$\CA$ in $\BBC^\ell$ is aspherical, i.e.~is a 
$K(\pi,1)$-space. That is, the universal covering space of $M(\CA)$ 
is contractible and the fundamental group
$\pi_1(M(\CA))$ of $M(\CA)$ is isomorphic to the group $\pi$.

By seminal work of Deligne \cite{deligne}
(the complexification of)
a \emph{simplicial} real arrangement is 
$K(\pi,1)$.

Another important class of arrangements which 
share this topological property are arrangements 
of \emph{fiber type} \cite{falkrandell:fiber-type}.
For such an arrangement $\CA$ the complement  $M(\CA)$ 
is described as a tower of successive
locally trivial linear fibrations with 
aspherical fibers and aspherical bases.
A repeated application of the
long exact sequence in homotopy theory
then gives that such $\CA$ are $K(\pi,1)$
(cf.~\cite[Prop.~5.12]{orlikterao:arrangements}).
By fundamental work of Terao \cite{terao:modular}, this 
property in turn is equivalent to \emph{supersolvability}
of $\CA$
(cf.~\cite[Thm.~5.113]{orlikterao:arrangements}).

A particularly important and prominent class of arrangements
for which this property is known to hold is the class of 
reflection arrangements stemming from 
complex reflection groups. 
In 1962,  Fadell and Neuwirth \cite{fadellneuwirth}
proved that the complexified braid arrangement 
of the symmetric group 
is  $K(\pi,1)$. 
Brieskorn \cite{brieskorn:tresses} extended this 
result to many finite
Coxeter groups and conjectured that this is the case for 
every finite Coxeter group.
As the latter are simplicial, 
this follows from Deligne's seminal work \cite{deligne}.
Nakamura proved asphericity for the imprimitive complex reflection groups, 
constructing explicit locally trivial fibrations \cite{nakamura}.
Utilizing their approach 
via \emph{Shephard groups}, Orlik and Solomon 
succeeded in showing that the reflection arrangements 
stemming from the remaining irreducible
complex reflection groups admit complements which 
are $K(\pi,1)$-spaces with the possible exception of 
just six cases \cite{orliksolomon:discriminant}; 
see also \cite[\S 6]{orlikterao:arrangements}.
These remaining instances were settled by Bessis in 
his brilliant proof employing Garside categories \cite{bessis:kpione}.

Because restrictions of simplicial (resp.~supersolvable)
arrangements are again simplicial 
(resp.~supersolvable), 
the $K(\pi, 1)$-property of these kinds of arrangements
is inherited by their restrictions.

In contrast, as the restriction of a reflection arrangement 
need not be a reflection arrangement again,
the question whether asphericity passes to 
restrictions of reflection arrangements is more subtle.
Nevertheless, it turns out that restrictions
of reflection arrangements  
are indeed again $K(\pi,1)$ with only 11 possible exceptions,
see \cite{amendroehrle:kpione}. Conjecturally, these remaining
instances  are also $K(\pi,1)$.

Nevertheless, the impression of a 
hereditary behavior of asphericity
alluded to by these particular classes is deceptive.
In this note we present examples of 
$K(\pi,1)$-arrangements which admit non-$K(\pi,1)$ restrictions,
giving the following.

\begin{theorem}
\label{thm:main}
The class of $K(\pi,1)$-arrangements is not closed under
taking restrictions.
\end{theorem}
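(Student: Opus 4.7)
The theorem is an existence statement, so it suffices to construct an explicit pair $(\CA,X)$ with $\CA$ a $K(\pi,1)$-arrangement, $X\in L(\CA)$ a flat, and $\CA^X$ not $K(\pi,1)$.

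The first task I would face is choosing the ambient $\CA$ wisely. The inheritance discussion in the introduction already rules out simplicial and supersolvable arrangements as candidates, and by the result of Amend--R\"ohrle~\cite{amendroehrle:kpione} restrictions of reflection arrangements are $K(\pi,1)$ in all but $11$ conjecturally aspherical cases. A natural source for $\CA$ is therefore a reflection arrangement $\CA(W)$, whose asphericity is on record via Deligne~\cite{deligne}, Nakamura~\cite{nakamura}, Orlik--Solomon~\cite{orliksolomon:discriminant} or Bessis~\cite{bessis:kpione}, together with a carefully chosen flat $X$ of codimension at least two for which $\CA^X$ realises a combinatorial type already known in the hyperplane-arrangement literature to be non-aspherical. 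Alternatively I would consider drawing $\CA$ from a less standard source of $K(\pi,1)$-arrangements, for instance a product of such, or a subarrangement of a reflection arrangement whose complement remains aspherical for structural reasons. In either case I would then list the hyperplanes of $\CA^X$ and its intersection lattice $L(\CA^X)$ by direct calculation, using software such as \Sage{} or \CHEVIE{} where convenient.

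The crux of the proof is to certify that the chosen $\CA^X$ is not $K(\pi,1)$. Since by construction none of the classical sufficient conditions applies, a genuine topological obstruction is needed. My preferred route would be to recognise $\CA^X$ as combinatorially equivalent to a known non-aspherical configuration from the literature; failing that, I would present $\pi_1(M(\CA^X))$ via the Randell--Arvola braid-monodromy algorithm and then establish non-vanishing of $\pi_2(M(\CA^X))$, for example by comparing $\chi(M(\CA^X))$ with the Euler characteristic of a finite $K(\pi_1,1)$-model, or by invoking resonance and characteristic variety obstructions. This last step is the principal difficulty: asphericity is a delicate topological condition, and even in low rank it can be hard to decide by purely combinatorial means, so the practical payoff of the plan depends entirely on landing on a flat $X$ whose restriction is recognisable as non-$K(\pi,1)$ by means already available in the literature.
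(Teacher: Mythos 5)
Your proposal is a research plan rather than a proof: the theorem is an existence statement, and you never actually produce the required pair $(\CA,X)$. You correctly identify the two tasks (choose $\CA$ aspherical, certify $\CA^X$ non-aspherical) and you even gesture at the right ideas --- a subarrangement of a reflection arrangement that is $K(\pi,1)$ ``for structural reasons,'' and recognising $\CA^X$ as a known non-aspherical combinatorial type --- but without a concrete arrangement, a concrete flat, and a concrete obstruction, nothing is proved. Worse, your primary candidate source would fail: you propose taking $\CA$ to be a reflection arrangement itself, yet the very result of Amend--R\"ohrle you cite says that restrictions of reflection arrangements are $K(\pi,1)$ in all but $11$ cases, and those remaining cases are conjectured to be $K(\pi,1)$ as well; so reflection arrangements are exactly the wrong place to look, and the introduction of the paper makes this point explicitly.

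The paper's actual argument supplies precisely the ingredients your outline leaves open. It takes $\CA = \CA_\CI$ to be an arrangement of ideal type inside the Coxeter arrangement of type $D_n$ ($n \ge 4$), which is $K(\pi,1)$ by \cite[Thm.~1.4]{amendroehrle:ideal}; it exhibits the explicit flat $Y = \bigcap_{2 \le i < j \le n-1} \ker(x_i - x_j)$; it computes that the restriction has defining polynomial $y(x-y)(x^2-z^2)(y^2-z^2)$; and it certifies non-asphericity of that rank-$3$ arrangement by a lattice-isotopy argument (Randell's theorem) carrying it to a real form $\CA(-2)$ that satisfies the Falk--Randell ``simple triangle'' criterion \cite[Cor.~3.3, (3.12)]{falkrandell:homotopy} --- this is the ``known non-aspherical configuration from the literature'' your plan hopes to land on (it has the same lattice as the Falk--Randell arrangement $X_3$). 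The last step is subtler than a direct combinatorial match, since the given real form does not itself display the simple triangle; one must first move within a lattice-isotopy class. To close the gap you would need to carry out all of these steps, or equivalent ones, explicitly.
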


Specifically, we construct infinite families  
of $K(\pi,1)$-subarrangements of 
Coxeter arrangements of type $D_n$ for any $n \ge 4$
each of which 
admits a restriction that fails to
be $K(\pi,1)$, see Lemma \ref{lem:dn} and Example \ref{ex:dn}. 
Our smallest example 
of this kind is a rank 4 subarrangement of the Coxeter arrangement of
type $D_4$ consisting of just 10 hyperplanes, see Example \ref{ex:d4}.
According to our knowledge, this is the first instance 
of the description of such examples in the literature.
Likely, this is not a particularly rare phenomenon.

It is remarkable that this phenomenon 
appears naturally among canonical subarrangements
of reflection arrangements.
This shows quite dramatically, while 
Coxeter arrangements themselves 
are  well understood, 
their subarrangements 
still hold some unexpected surprises. 

In contrast to the situation with restrictions, 
any localization of a 
$K(\pi,1)$-arrangement
is known to be $K(\pi,1)$ again, by an observation due to
Oka, e.g., see \cite{paris:deligne}.

\section{Preliminaries}
\label{sect:prelims}

\subsection{Hyperplane arrangements}
\label{ssect:arrangements}
Let $V = \BBC^\ell$ be an $\ell$-dimensional $\BBC$-vector space.
A \emph{hyperplane arrangement} $\CA = (\CA, V)$ in $V$ 
is a finite collection of hyperplanes in $V$.
We also use the term $\ell$-arrangement for $\CA$ to 
indicate the dimension of the ambient space $V$. 
If the linear equations describing all members of
$\CA$ are real, then we say that $\CA$ is  
\emph{real}.

The \emph{lattice} $L(\CA)$ of $\CA$ is the set of subspaces of $V$ of
the form $H_1\cap \ldots \cap H_i$ where $\{ H_1, \ldots, H_i\}$ is a subset
of $\CA$. 
The lattice $L(\CA)$ is a partially ordered set by reverse inclusion:
$X \le Y$ provided $Y \subseteq X$ for $X,Y \in L(\CA)$.

For $X \in L(\CA)$, we have two associated arrangements, 
firstly
$\CA_X :=\{H \in \CA \mid X \subseteq H\} \subseteq \CA$,
the \emph{localization of $\CA$ at $X$}, 
and secondly, 
the \emph{restriction of $\CA$ to $X$}, $(\CA^X,X)$, where 
$\CA^X := \{ X \cap H \mid H \in \CA \setminus \CA_X\}$.

If $0 \in H$ for each $H$ in $\CA$, then 
$\CA$ is called \emph{central}.
If $\CA$ is central, then the \emph{center} 
$\cap_{H \in \CA} H$ of $\CA$ is the unique
maximal element in $L(\CA)$  with respect
to the partial order.
We have a \emph{rank} function on $L(\CA)$: $r(X) := \codim_V(X)$.
The \emph{rank} $r := r(\CA)$ of $\CA$ 
is the rank of a maximal element in $L(\CA)$.
Throughout this article, we only consider central arrangements.
For $\CA$ central, for each $H$ in $\CA$ let $\alpha_H$ be a 
linear form in $V^*$ so that $H = \ker \alpha_H$.
Then $Q(\CA) = \prod_{H\in \CA}\alpha_H$ is the 
\emph{defining polynomial} of $\CA$.

\bigskip
It is easy to see that a central 
arrangement of rank at most $2$ is $K(\pi, 1)$
(\cite[Prop.~5.6]{orlikterao:arrangements}).

This topological property is not generic among all arrangements.
A \emph{generic} complex 
$\ell$-arrangement $\CA$ is an $\ell$-arrangement with at least $\ell + 1$ hyperplanes and
the property that the hyperplanes of every subarrangement
$\CB \subseteq \CA$ with $\vert\CB\vert = \ell$
are linearly independent. It follows from work of Hattori \cite{hattori} that, for $\ell\geq 3$,
generic arrangements are never $K(\pi, 1)$ (cf.~\cite[Cor.~5.23]{orlikterao:arrangements}).

\begin{lemma}
\label{lem:nonkpione}
The real 3-arrangement $\CA$
given by 
$Q(\CA) = y (x-y) (x^2-z^2) (y^2-z^2)$ is not $K(\pi,1)$.
\end{lemma}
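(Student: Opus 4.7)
I would begin by reading off the intersection lattice $L(\CA)$ from the defining polynomial. Labeling the six hyperplanes as $H_1=\ker(y)$, $H_2=\ker(x-y)$, $H_3=\ker(x-z)$, $H_4=\ker(x+z)$, $H_5=\ker(y-z)$, $H_6=\ker(y+z)$, a direct inspection of pairwise intersections shows that the rank-$2$ stratum of $L(\CA)$ consists of exactly three ``triple points'' — $\{H_1,H_5,H_6\}$ meeting on $\{y=z=0\}$, $\{H_2,H_3,H_5\}$ meeting on $\{x=y=z\}$, and $\{H_2,H_4,H_6\}$ meeting on $\{x=y=-z\}$ — together with six ``double'' intersections. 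Computing the M\"obius function on $L(\CA)$ then gives
\[
\chi(\CA,t)\;=\;t^{3}-6t^{2}+12t-7\;=\;(t-1)\bigl(t^{2}-5t+7\bigr).
\]
The quadratic factor has discriminant $-3$, so it does not split over $\BBR$, much less over $\BBZ$. By Terao's factorization theorem, $\CA$ is therefore \emph{not} supersolvable, and in particular not of fiber type in the sense of Falk-Randell \cite{falkrandell:fiber-type}.

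Next, I would exploit centrality to decone: the map $(x,y,z)\mapsto\bigl(y,(x/y,z/y)\bigr)$ gives a homeomorphism $M(\CA)\cong\BBC^{*}\times F$, where $F\subset\BBC^{2}$ is the complement of the five affine lines $x=1$, $z=\pm1$, $x=\pm z$. Thus $\CA$ is $K(\pi,1)$ if and only if $F$ is. In turn, the projective closure of this affine arrangement (adjoining the line at infinity) recovers precisely the projectivization of $\CA$, so asphericity is the same question for $\CA$, its projective counterpart in $\BBP^{2}$, and the affine $F$-arrangement.

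The remaining task is to show $F$ is not $K(\pi,1)$. I would first verify directly that no linear projection $\BBC^{2}\to\BBC$ turns $M(F)$ into a locally trivial fibration with aspherical punctured-line fibers: in each projection direction, some marked point in the generic fiber collides with another in a way \emph{not} masked by a removed hyperplane. This is the combinatorial shadow of the absence of a modular coatom in $L(\CA)$ and rules out the Falk-Randell fibration route. To upgrade this negative information to a genuine failure of asphericity I would invoke the classification of $K(\pi,1)$ line arrangements of $\BBP^{2}$ with few lines (Jiang-Yau, Nazir-Yoshinaga, Garber and others), in which the combinatorial type ``six lines with three triple points and a triangle-with-nonconcurrent-cevians structure'' appears as a non-$K(\pi,1)$ case; alternatively, one can produce a nonvanishing class in $\pi_{2}(F)$ via a discrepancy between the characteristic variety of $\pi_{1}$ and the resonance variety of the Orlik-Solomon algebra.

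\textbf{Main obstacle.} The hardest step is the last one. Non-supersolvability is only a necessary condition for non-asphericity, so no purely combinatorial invariant of $L(\CA)$ by itself can conclude the argument; one must either cite a topological classification result for small line arrangements in $\BBP^{2}$ or carry out a Zariski-van Kampen-type computation to exhibit an explicit obstruction.
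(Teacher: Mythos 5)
Your combinatorial analysis is correct: the lattice has exactly three triple points and six double points, $\chi(\CA,t)=t^3-6t^2+12t-7=(t-1)(t^2-5t+7)$, the quadratic factor does not split over $\BBZ$, and so $\CA$ is neither supersolvable nor of fiber type; the deconing step is also fine. But, as you yourself flag, none of this proves the lemma, and the two routes you propose for the decisive step do not close the gap. First, there is no classification of $K(\pi,1)$ line arrangements in $\BBP^{2}$ with six lines that you could cite: the $K(\pi,1)$ problem for line arrangements is open in general, and the results of Jiang--Yau, Nazir--Yoshinaga, Garber and others give sufficient conditions for asphericity or non-asphericity, not a dichotomy covering this combinatorial type. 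Second, a ``discrepancy between the characteristic variety and the resonance variety'' involves invariants of $\pi_{1}$ and of the cohomology ring, respectively; no known mechanism converts such a discrepancy into a nonzero class in $\pi_{2}(F)$, so this is a restatement of the problem rather than a proof strategy.

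The paper closes the gap with a concrete two-step argument that is absent from your plan. After a real linear change of coordinates, $\CA$ becomes $\CA(1)$ in the one-parameter family $Q(\CA(t))=xyz(x+y)(x+z)(y+tz)$. For real $t<0$ the arrangement $\CA(t)$ exhibits a \emph{simple triangle}, and the Falk--Randell criterion \cite[Cor.~3.3, (3.12)]{falkrandell:homotopy} then shows $\CA(t)$ is not $K(\pi,1)$; this is the actual topological obstruction, and it is not visible in the real picture of $\CA(1)$ itself. Since the lattice is constant along the path $t=\tfrac{3}{2}e^{\pi i s}-\tfrac{1}{2}$ joining $t=1$ to $t=-2$, Randell's lattice-isotopy theorem \cite{randell:lattice-isotopy} yields a diffeomorphism $M(\CA(1))\cong M(\CA(-2))$, transporting the obstruction back to $\CA$. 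To complete your argument you need some such explicit obstruction; the simple-triangle test applied after a lattice isotopy to a better real form of the same combinatorial type is the missing idea.
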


\begin{proof}
For $t \in \BBC$, let  
$\CA(t)$ be the 3-arrangement
given by 
$Q(\CA(t)) = x y z (x+y) (x+z) (y+tz)$. 
Using the transformation 
$(x,y,z)\mapsto (x-y,y-z,y+z)$, 
it is easy to see that 
$\CA$ and $\CA(1)$ are linearly isomorphic, 
so their complements are diffeomorphic.
For $t \in \BBC \setminus \{0,-1\}$ 
the arrangements $\CA(t)$ are all 
combinatorially isomorphic. 
Moreover, for real negative $t$ they satisfy 
the ``simple triangle'' condition of Falk and Randell, 
cf.~\cite[Cor.~3.3, (3.12)]{falkrandell:homotopy}, so $\CA(t)$ is not $K(\pi,1)$ for $t<0$.

For completeness we show 
that $\CA(1)$ and $\CA(-2)$ 
have diffeomorphic complements, which shows that 
$\CA(1)\cong \CA$ is not $K(\pi,1)$. 
For $t\in\BBC$ define the 1-parameter family 
$ \CB_t=\CA(\frac{3}{2}e^{\pi i t}-\frac{1}{2})$. Then for every $t\in \BBC$, 
$\CB_t$ admits the same lattice, 
so  $\CB_t$ is a \emph{lattice isotopy}, see \cite{randell:lattice-isotopy}. 
It follows from Randell's isotopy theorem \cite{randell:lattice-isotopy} that $\CB_0 = \CA(1)$ 
and $\CB_1 = \CA(-2)$ have diffeomorphic complements. In Figure \ref{fig:aminustwo} we show a projective picture of the real arrangement $\CA(-2)$ with the simple triangle shaded in gray.

Note that the arrangement $X_3$ considered by Falk and Randell in 
\cite[(2.6)]{falkrandell:homotopy} has the same lattice as $\CA$.
\end{proof}

\begin{figure}[h]
\resizebox{0.46\textwidth}{!}{
\begingroup%
\makeatletter%
\begin{pgfpicture}%
\pgfpathrectangle{\pgfpointorigin}{\pgfqpoint{5.840000in}{5.840000in}}%
\pgfusepath{use as bounding box, clip}%
\begin{pgfscope}%
\pgfsetbuttcap%
\pgfsetmiterjoin%
\definecolor{currentfill}{rgb}{1.000000,1.000000,1.000000}%
\pgfsetfillcolor{currentfill}%
\pgfsetlinewidth{0.000000pt}%
\definecolor{currentstroke}{rgb}{1.000000,1.000000,1.000000}%
\pgfsetstrokecolor{currentstroke}%
\pgfsetdash{}{0pt}%
\pgfpathmoveto{\pgfqpoint{0.000000in}{0.000000in}}%
\pgfpathlineto{\pgfqpoint{5.840000in}{0.000000in}}%
\pgfpathlineto{\pgfqpoint{5.840000in}{5.840000in}}%
\pgfpathlineto{\pgfqpoint{0.000000in}{5.840000in}}%
\pgfpathclose%
\pgfusepath{fill}%
\end{pgfscope}%
\begin{pgfscope}%
\pgfsetbuttcap%
\pgfsetmiterjoin%
\definecolor{currentfill}{rgb}{1.000000,1.000000,1.000000}%
\pgfsetfillcolor{currentfill}%
\pgfsetlinewidth{0.000000pt}%
\definecolor{currentstroke}{rgb}{0.000000,0.000000,0.000000}%
\pgfsetstrokecolor{currentstroke}%
\pgfsetstrokeopacity{0.000000}%
\pgfsetdash{}{0pt}%
\pgfpathmoveto{\pgfqpoint{0.100000in}{0.100000in}}%
\pgfpathlineto{\pgfqpoint{5.740000in}{0.100000in}}%
\pgfpathlineto{\pgfqpoint{5.740000in}{5.740000in}}%
\pgfpathlineto{\pgfqpoint{0.100000in}{5.740000in}}%
\pgfpathclose%
\pgfusepath{fill}%
\end{pgfscope}%
\begin{pgfscope}%
\pgfpathrectangle{\pgfqpoint{0.100000in}{0.100000in}}{\pgfqpoint{5.640000in}{5.640000in}} %
\pgfusepath{clip}%
\pgfsetbuttcap%
\pgfsetmiterjoin%
\definecolor{currentfill}{rgb}{0.827451,0.827451,0.827451}%
\pgfsetfillcolor{currentfill}%
\pgfsetlinewidth{0.000000pt}%
\definecolor{currentstroke}{rgb}{0.827451,0.827451,0.827451}%
\pgfsetstrokecolor{currentstroke}%
\pgfsetdash{}{0pt}%
\pgfpathmoveto{\pgfqpoint{1.112308in}{4.727692in}}%
\pgfpathlineto{\pgfqpoint{2.016154in}{4.727692in}}%
\pgfpathlineto{\pgfqpoint{2.016154in}{3.823846in}}%
\pgfpathclose%
\pgfusepath{fill}%
\end{pgfscope}%
\begin{pgfscope}%
\pgfpathrectangle{\pgfqpoint{0.100000in}{0.100000in}}{\pgfqpoint{5.640000in}{5.640000in}} %
\pgfusepath{clip}%
\pgfsetrectcap%
\pgfsetroundjoin%
\pgfsetlinewidth{1.003750pt}%
\definecolor{currentstroke}{rgb}{0.000000,0.000000,0.000000}%
\pgfsetstrokecolor{currentstroke}%
\pgfsetdash{}{0pt}%
\pgfpathmoveto{\pgfqpoint{0.208462in}{4.727692in}}%
\pgfpathlineto{\pgfqpoint{5.631538in}{4.727692in}}%
\pgfpathlineto{\pgfqpoint{5.631538in}{4.727692in}}%
\pgfusepath{stroke}%
\end{pgfscope}%
\begin{pgfscope}%
\pgfpathrectangle{\pgfqpoint{0.100000in}{0.100000in}}{\pgfqpoint{5.640000in}{5.640000in}} %
\pgfusepath{clip}%
\pgfsetrectcap%
\pgfsetroundjoin%
\pgfsetlinewidth{1.003750pt}%
\definecolor{currentstroke}{rgb}{0.000000,0.000000,0.000000}%
\pgfsetstrokecolor{currentstroke}%
\pgfsetdash{}{0pt}%
\pgfpathmoveto{\pgfqpoint{0.208462in}{2.920000in}}%
\pgfpathlineto{\pgfqpoint{5.631538in}{2.920000in}}%
\pgfpathlineto{\pgfqpoint{5.631538in}{2.920000in}}%
\pgfusepath{stroke}%
\end{pgfscope}%
\begin{pgfscope}%
\pgfpathrectangle{\pgfqpoint{0.100000in}{0.100000in}}{\pgfqpoint{5.640000in}{5.640000in}} %
\pgfusepath{clip}%
\pgfsetrectcap%
\pgfsetroundjoin%
\pgfsetlinewidth{1.003750pt}%
\definecolor{currentstroke}{rgb}{0.000000,0.000000,0.000000}%
\pgfsetstrokecolor{currentstroke}%
\pgfsetdash{}{0pt}%
\pgfpathmoveto{\pgfqpoint{2.920000in}{0.208462in}}%
\pgfpathlineto{\pgfqpoint{2.920000in}{5.631538in}}%
\pgfpathlineto{\pgfqpoint{2.920000in}{5.631538in}}%
\pgfusepath{stroke}%
\end{pgfscope}%
\begin{pgfscope}%
\pgfpathrectangle{\pgfqpoint{0.100000in}{0.100000in}}{\pgfqpoint{5.640000in}{5.640000in}} %
\pgfusepath{clip}%
\pgfsetrectcap%
\pgfsetroundjoin%
\pgfsetlinewidth{1.003750pt}%
\definecolor{currentstroke}{rgb}{0.000000,0.000000,0.000000}%
\pgfsetstrokecolor{currentstroke}%
\pgfsetdash{}{0pt}%
\pgfpathmoveto{\pgfqpoint{2.016154in}{0.208462in}}%
\pgfpathlineto{\pgfqpoint{2.016154in}{5.631538in}}%
\pgfpathlineto{\pgfqpoint{2.016154in}{5.631538in}}%
\pgfusepath{stroke}%
\end{pgfscope}%
\begin{pgfscope}%
\pgfpathrectangle{\pgfqpoint{0.100000in}{0.100000in}}{\pgfqpoint{5.640000in}{5.640000in}} %
\pgfusepath{clip}%
\pgfsetrectcap%
\pgfsetroundjoin%
\pgfsetlinewidth{1.003750pt}%
\definecolor{currentstroke}{rgb}{0.000000,0.000000,0.000000}%
\pgfsetstrokecolor{currentstroke}%
\pgfsetdash{}{0pt}%
\pgfpathmoveto{\pgfqpoint{0.208462in}{5.631538in}}%
\pgfpathlineto{\pgfqpoint{5.631538in}{0.208462in}}%
\pgfpathlineto{\pgfqpoint{5.631538in}{0.208462in}}%
\pgfusepath{stroke}%
\end{pgfscope}%
\begin{pgfscope}%
\pgfpathrectangle{\pgfqpoint{0.100000in}{0.100000in}}{\pgfqpoint{5.640000in}{5.640000in}} %
\pgfusepath{clip}%
\pgfsetbuttcap%
\pgfsetmiterjoin%
\pgfsetlinewidth{0.501875pt}%
\definecolor{currentstroke}{rgb}{0.000000,0.000000,0.000000}%
\pgfsetstrokecolor{currentstroke}%
\pgfsetdash{}{0pt}%
\pgfpathmoveto{\pgfqpoint{2.920000in}{-0.062692in}}%
\pgfpathcurveto{\pgfqpoint{3.711019in}{-0.062692in}}{\pgfqpoint{4.469747in}{0.251583in}}{\pgfqpoint{5.029082in}{0.810918in}}%
\pgfpathcurveto{\pgfqpoint{5.588417in}{1.370253in}}{\pgfqpoint{5.902692in}{2.128981in}}{\pgfqpoint{5.902692in}{2.920000in}}%
\pgfpathcurveto{\pgfqpoint{5.902692in}{3.711019in}}{\pgfqpoint{5.588417in}{4.469747in}}{\pgfqpoint{5.029082in}{5.029082in}}%
\pgfpathcurveto{\pgfqpoint{4.469747in}{5.588417in}}{\pgfqpoint{3.711019in}{5.902692in}}{\pgfqpoint{2.920000in}{5.902692in}}%
\pgfpathcurveto{\pgfqpoint{2.128981in}{5.902692in}}{\pgfqpoint{1.370253in}{5.588417in}}{\pgfqpoint{0.810918in}{5.029082in}}%
\pgfpathcurveto{\pgfqpoint{0.251583in}{4.469747in}}{\pgfqpoint{-0.062692in}{3.711019in}}{\pgfqpoint{-0.062692in}{2.920000in}}%
\pgfpathcurveto{\pgfqpoint{-0.062692in}{2.128981in}}{\pgfqpoint{0.251583in}{1.370253in}}{\pgfqpoint{0.810918in}{0.810918in}}%
\pgfpathcurveto{\pgfqpoint{1.370253in}{0.251583in}}{\pgfqpoint{2.128981in}{-0.062692in}}{\pgfqpoint{2.920000in}{-0.062692in}}%
\pgfpathclose%
\pgfusepath{stroke}%
\end{pgfscope}%
\end{pgfpicture}%
\makeatother%
\endgroup
\caption{The real arrangement $\CA(-2)$.}
\label{fig:aminustwo}
\end{figure}

\subsection{Arrangements of ideal type}
\label{ssect:ai}

Our examples which imply Theorem \ref{thm:main}
 stem from a particular class of 
subarrangements of Coxeter arrangements which we 
describe very briefly. 

Let $\Phi$ be an irreducible, reduced root system
and let $\Phi^+$ be the set of positive roots 
with respect to some set of simple roots $\Pi$.
An \emph{(upper) order ideal}, 
or simply \emph{ideal} for short, 
of  $\Phi^+$, is a subset $\CI$  of $\Phi^+$ 
satisfying the following condition: 
if $\alpha \in \CI$ and $\beta \in \Phi^+$ so that 
$\alpha + \beta \in \Phi^+$, then $\alpha + \beta \in \CI$.

Recall the standard partial ordering 
$\preceq$ on $\Phi$: $\alpha \preceq \beta$
provided $\beta - \alpha$ is a $\BBZ_{\ge0}$-linear combination 
of positive roots, or $\beta = \alpha$. Then $\CI$ is an ideal in $\Phi^+$
if and only if whenever 
$\alpha \in \CI$ and $\beta \in \Phi^+$ so that 
$\alpha \preceq \beta$, then $\beta \in \CI$.
The \emph{generators} of a given ideal $\CI$
are simply the elements in $\CI$ which are minimal 
with respect to $\preceq$.

Let $\CI \subseteq \Phi^+$ be an ideal and let $\CI^c := \Phi^+ \setminus \CI$
be its complement in 
$\Phi^+$. 
Let $\CA(\Phi)$ be the \emph{Weyl arrangement} of $\Phi$,
i.e., $\CA(\Phi) = \{ H_\alpha \mid \alpha \in \Phi^+\}$,
where $H_\alpha$ is the hyperplane in the Euclidean space
$V = \BBR \otimes \BBZ \Phi$ orthogonal to the root $\alpha$.
Following \cite[\S 11]{sommerstymoczko}, 
we associate with an ideal $\CI$ in $\Phi^+$ the arrangement 
consisting of all hyperplanes with respect to the roots in $\CI^c$.
The \emph{arrangement of ideal type} associated with 
$\CI$ is the subarrangement $\CA_\CI$
of $\CA(\Phi)$ defined by 
\[
\CA_\CI := \{ H_\alpha \mid \alpha \in \CI^c\}.
\]

Note that if $\CI = \varnothing$, then 
$\CA_\CI = \CA(\Phi)$ is just the reflection arrangement of $\Phi$ and so 
$\CA_\varnothing$ is $K(\pi,1)$ by Deligne's result.
It is shown in 
\cite[Thm.~1.4]{amendroehrle:ideal} 
that all arrangements of ideal type 
$\CA_\CI$ are $K(\pi,1)$ 
provided the underlying root system is classical.
This is also known for most  $\CA_\CI$
for root systems of exceptional type,  
\cite[Thm.~1.3(iii)]{amendroehrle:ideal}
and is conjectured to hold for all $\CA_\CI$.

\section{Proof of Theorem \ref{thm:main}}

In this section we 
label the positive roots in a root system of type $D_n$ as in
\cite[Planche IV]{bourbaki:groupes}.
In view of \cite[Thm.~1.4]{amendroehrle:ideal}, 
Theorem \ref{thm:main} is a consequence of the following result.

\begin{lemma}
\label{lem:dn}
Let  $\Phi$ be the root system of 
type $D_n$ for $n\ge 4$.
We consider two kinds of ideal arrangements $\CA_\CI$ by listing the generators of 
the corresponding ideals $\CI$:
\begin{itemize}
\item[(i)] $1\ldots1\!\stackrel{1}{_{1}} \ = e_1+e_{n-1}$; 

\item[(ii)] $1\ldots1\!\stackrel{1}{_{1}} \ = e_1+e_{n-1}$, 
$0\ldots01\ldots12\ldots2\!\stackrel{1}{_{1}} \ =  e_s+e_t$, where $1 <  s < t < n-1$. 
Here $s$ is the first position with a coefficient $1$ and $t$ is the first position labeled with $2$.
\end{itemize}
Consider
$$
Y:= \bigcap_{2 \le i < j \le n-1} \ker(x_i-x_j)
$$
in $L(\CA_\CI)$.
Then the rank $3$ restriction $\CA_\CI^Y$ is not $K(\pi,1)$.
\end{lemma}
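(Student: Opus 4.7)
The plan is to compute the restriction $\CA_\CI^Y$ explicitly in suitable coordinates and show that it is linearly isomorphic to the arrangement $\CA$ of Lemma \ref{lem:nonkpione}; the conclusion then follows immediately.

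First I would describe $\CI$ in both cases as an explicit set of roots. Since any positive root of $D_n$ has non-negative coordinate sum, no root of the form $e_a - e_b$ can lie above $e_s + e_t$ in $\preceq$. Moreover, every positive root has $e_1$-coefficient in $\{0,1\}$, and a brief check shows that the upward closure of a generator $e_s + e_t$ (with $s < t$) in $\Phi^+$ consists precisely of the roots $e_a + e_b$ with $a \le s$, $b \le t$, and $a < b$. Hence in case (i), $\CI = \{e_1 + e_j : 2 \le j \le n-1\}$; in case (ii), $\CI$ additionally contains $\{e_a + e_b : 2 \le a \le s,\ a < b \le t\}$.

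Next I introduce coordinates $u = x_1$, $v = x_2 = \cdots = x_{n-1}$, $w = x_n$ on $Y$ and inspect the restriction of each positive root $\alpha \notin \CI$. The roots $e_i - e_j$ with $2 \le i < j \le n-1$ vanish on $Y$, defining the hyperplanes of the localization $(\CA_\CI)_Y$, and so are omitted from the restriction. Every other positive root restricts, up to a non-zero scalar, to one of the seven linear forms $u-v$, $u-w$, $v-w$, $u+v$, $u+w$, $v+w$, $v$ on $Y$. A short bookkeeping shows that $u+v$ arises only from the roots $e_1 + e_j$ ($2 \le j \le n-1$), every one of which lies in $\CI$, so $u+v$ disappears from $\CA_\CI^Y$. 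The form $v$ arises from $e_i + e_j$ with $2 \le i < j \le n-1$, and at least one such root (for instance $e_{n-2} + e_{n-1}$, using $s \le n-3$ in case (ii)) lies outside $\CI$, so $v$ survives. Each of the remaining five forms admits a pre-image of type $e_i - e_j$ or $e_i + e_n$, none of which lies in $\CI$.

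Combining these observations, $\CA_\CI^Y$ has defining polynomial $v(u-v)(u-w)(u+w)(v-w)(v+w)$, which under $(u,v,w) \mapsto (x,y,z)$ becomes $y(x-y)(x^2 - z^2)(y^2 - z^2) = Q(\CA)$. Thus $\CA_\CI^Y$ is linearly isomorphic to the arrangement $\CA$ of Lemma \ref{lem:nonkpione} and is therefore not $K(\pi,1)$. The step requiring most care is the explicit description of $\CI$, particularly in case (ii) where the additional generator is given in simple-root-coefficient notation and must be translated into the $e_i$-presentation; the remainder is a mechanical restriction computation.
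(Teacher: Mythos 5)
Your proof is correct and follows essentially the same route as the paper: make the ideal (hence $\CA_\CI$) explicit, restrict to $Y$, observe that the defining polynomial becomes $y(x-y)(x^2-z^2)(y^2-z^2)$, and invoke Lemma~\ref{lem:nonkpione}. If anything, your description of the upward closure of $e_s+e_t$ in case (ii) is slightly more careful than the paper's displayed formula for $\CA_\CI$ (which omits the roots $e_a+e_b$ with $2\le a<b\le s$ when $s\ge 3$), but since those hyperplanes all restrict to $\ker(x_2)$, which survives anyway via $e_{n-2}+e_{n-1}$, this makes no difference to the resulting restriction.
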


\begin{proof}
Let $\CD_n$ be the reflection arrangement of $D_n$, i.e.
$$
\CD_n = \left\{\ker(x_i\pm x_j)\ \middle|\ 1 \leq i < j \leq n\right\}.
$$
If $\CI$ is of type (i), then $\CA_\CI$ is the arrangement
$$
\CA_\CI = \CD_n\setminus\left\{\ker(x_1+x_i)\ \middle|\ 2 \le i \le n-1\right\},
$$
and if $\CI$ is of type (ii), then we have
$$
\CA_\CI = \CD_n\setminus\left\{\ker(x_1+x_i), \ker(x_j+x_k)\ \middle|\ 2 \le i \le n-1,\ 2 \le j \leq s < k \le t\right\}.
$$
In both cases, restricting to $Y$ we get the rank $3$ arrangement $\CA_\CI^Y$ with defining polynomial
$$
Q(\CA_\CI^Y) = (x_1-x_2)x_2(x_1^2-x_n^2)(x_2^2-x_n^2) \in \BBC\left[x_1,x_2,x_n\right],
$$
and thus by Lemma \ref{lem:nonkpione}, the restriction is not $K(\pi,1)$.
\end{proof}

The following example illustrates the smallest instance from Lemma \ref{lem:dn}(i).

\begin{example}
\label{ex:d4}
Let  $\CI$ be the
ideal in the set of positive 
roots in the root system of 
type $D_4$
generated by 
$\stackrel{111}{_{1}}\ = e_1+e_3$, the unique root of height $4$ and let
$\CA_\CI$ be the corresponding arrangement.
It is obtained 
from the full Weyl arrangement of 
type $D_4$ by removing the 
hyperplanes corresponding to the
two highest roots, so that
$\CA_\CI$ has defining polynomial
\[
Q(\CA_\CI) = 
(x_1-x_2)(x_1-x_3)(x_2^2-x_3^2)(x_1^2-x_4^2)(x_2^2-x_4^2)(x_3^2-x_4^2).
\]
Consider the restriction of 
$\CA_\CI$ to the hyperplane 
$H:= \ker(x_2-x_3)$. Then 
$\CA_\CI^H$ 
has defining polynomial 
$y (x-y) (x^2-z^2) (y^2-z^2)$.

Remarkably, 
this is the smallest instance among the class of 
arrangements of ideal type which is neither supersolvable nor 
simplicial. So this phenomenon of a non-$K(\pi,1)$
restriction appears among the family of 
arrangements of ideal type already for the smallest
instance where this is possible.
\end{example}

The cases from Lemma \ref{lem:dn} lead to additional instances 
of $K(\pi,1)$-arrangements with non-$K(\pi,1)$ restrictions
by means of localizations, as illustrated by our next example.

\begin{example}
\label{ex:dn}
Let $\Phi$ be of type $D_n$ and let $\CI$ be an ideal in $\Phi^+$ which admits 
the root $0\ldots01\ldots11\!\stackrel{1}{_{1}} \ = e_r+e_{n-1}$
as a generator, 
where $1 < r \le n-3$. 
Let $\CB:=\CA_\CI$ be the corresponding
arrangement of ideal type. 
Let $\Phi_0$ be the standard subsystem of type $D_m$,
where $m:= n-r+1$, and 
let $X := \cap_{\gamma \in \Phi_0^+}H_\gamma$.
Then $X$ belongs to $L(\CB)$ and the localization
$\CB_{X}$ is isomorphic to one of the arrangements of ideal type
in type $D_m$ considered in Lemma \ref{lem:dn} above.
Consequently, choosing the member $Y$ of the lattice of $\CB$ corresponding to
the one used in the proof of Lemma \ref{lem:dn},
we see that $(\CB_{X})^Y$ is isomorphic 
to the non-$K(\pi,1)$ restriction 
from Lemma \ref{lem:dn}.
Since $(\CB^Y)_{X} = (\CB_{X})^Y$, 
it follows from Oka's observation that also 
$\CB^Y$ is not $K(\pi,1)$.
However, 
$\CB$ itself is $K(\pi,1)$, thanks to 
\cite[Thm.~1.4]{amendroehrle:ideal}.
\end{example}

The following example illustrates 
all instances of an arrangement of ideal type 
$\CA_\CI$ for a root system of type $D_5$
when there is a rank $3$ restriction which admits a ``simple triangle'' and is thus not $K(\pi,1)$.
All of them stem from the constructions outlined in Lemma \ref{lem:dn} and  Example \ref{ex:dn}.
Computations for higher rank instances suggest that this is always the case.

\begin{example}
\label{ex:d5}
Let $\Phi$ be of type $D_5$ and let $\CI$ be an ideal in $\Phi^+$. Then 
$\CA_\CI$ has a rank $3$ restriction which admits a ``simple triangle'' and is thus not $K(\pi,1)$
if and only if $\CI$ is one of the following ideals (we again just list the generators of $\CI$):

\[
\begin{array}{llll}
(i) & 111\!\stackrel{1}{_{1}} & (v) & 011\!\stackrel{1}{_{1}}, 110\!\stackrel{0}{_{0}} \\
(ii) & 111\!\stackrel{1}{_{1}}, 012\!\stackrel{1}{_{1}} & (vi) & 011\!\stackrel{1}{_{1}}, 111\!\stackrel{0}{_{0}}\\
(iii) & 011\!\stackrel{1}{_{1}} & (vii) & 011\!\stackrel{1}{_{1}}, 111\!\stackrel{1}{_{0}} \\
(iv) & 011\!\stackrel{1}{_{1}}, 100\!\stackrel{0}{_{0}} & (viii) & 011\!\stackrel{1}{_{1}}, 111\!\stackrel{0}{_{1}}, 111\!\stackrel{1}{_{0}} \\
\end{array}
\]
Here the cases (i) and (ii) stem from Lemma \ref{lem:dn}(i) and (ii) 
and the cases (iii) - (viii) are covered in  Example \ref{ex:dn}.
\end{example}

While the arrangement of ideal type in our
following example stems from the one 
considered in Example \ref{ex:d4} by merely adding a single hyperplane,
the topological properties of the restrictions of both arrangements
differ sharply.

\begin{example}
\label{ex:d4-2}
Let  $\CI$ be the
ideal in the set of positive 
roots in the root system of 
type $D_4$
generated by the highest root 
$\stackrel{121}{_{1}}\ = e_1+e_2$ and let
$\CA_\CI$ be the corresponding
arrangement, i.e.~$\CA_\CI$ is obtained from the full Weyl arrangement of $D_4$ 
by removing the hyperplane corresponding to the highest root. 
One checks that $\CA_\CI$ is neither supersolvable nor simplicial.
It turns out that every restriction of $\CA_\CI^H$ to a hyperplane $H$ 
is still \emph{factored}, see \cite{terao:factored}.
It thus follows from \cite{paris:factored}
that each $\CA_\CI^H$ is still $K(\pi,1)$.
\end{example}

\begin{remark}
\label{rem:dn}
Note that while all $\CA_\CI$ are free, see \cite[Th.~11.1]{sommerstymoczko} and \cite[Thm.~1.1]{abeetall:weyl}, the
particular restrictions $\CA_\CI^Y$ considered in Lemma \ref{lem:dn} and in  Example \ref{ex:dn}
are not free, see \cite[(3.12)]{falkrandell:homotopy}.
Consequently, arrangements of ideal type are not
hereditarily free.
Nevertheless, all $\CA_\CI$ are actually inductively free, see 
\cite{roehrle:ideal} and \cite{cuntzroehrleschauenburg:ideal}.

In contrast, the ambient Weyl arrangement 
$\CA(\Phi)$ itself is hereditarily free, see \cite{orlikterao:free}. 
In  \cite{douglass:adjoint}, Douglass
gave a uniform proof
of this fact using an elegant conceptual Lie theoretic argument.
\end{remark}


\bigskip {\bf Acknowledgments}: 
The research of this work was supported by 
DFG-grant RO 1072/16-1.


\bibliographystyle{amsalpha}

\newcommand{\etalchar}[1]{$^{#1}$}
\providecommand{\bysame}{\leavevmode\hbox to3em{\hrulefill}\thinspace}
\providecommand{\MR}{\relax\ifhmode\unskip\space\fi MR }
\providecommand{\MRhref}[2]{%
  \href{http://www.ams.org/mathscinet-getitem?mr=#1}{#2} }
\providecommand{\href}[2]{#2}


\end{document}